\theoremstyle{plain}
\newtheorem{proposition}{Proposition}
\newtheorem{theorem}{Theorem}
\theoremstyle{remark}
\theoremstyle{definition}
\newtheorem{example}{Example}
\newtheorem{question}{Question}
\newtheorem{definition}{Definition}
\renewcommand{\le}{\leqslant}
\renewcommand{\ge}{\geqslant}
\newcommand{\Le}{\leq}
\newcommand{\Ge}{\ge}
\DeclareMathOperator {\ord}{ord}
\DeclareMathOperator {\Card}{Card}
\DeclareMathOperator {\rk}{rk}
\DeclareMathOperator{\Char}{char}
\newcommand{\N}{\mathbb{N}}
\newcommand{\Z}{\mathbb{Z}}
\newcommand{\K}{\mathbb{K}}
\def\deg{\operatorname{deg}}
\def\ord{\operatorname{ord}}
\def\C{\mathbb C}
\def\F{\mathcal {F}}
\def\G{\mathcal {G}}
\begin{document}
\title[]{A Bound for a Typical Differential Dimension\\ 
of Systems of Linear Differential Equations.} 
\author{M. V. Kondratieva} 
\address{Moscow State University\\ 
Department of Mechanics and Mathematics\\ Leninskie Gory, Moscow, 
Russia, 119991.}
\email{kondratieva@sumail.ru} 
 
\begin{abstract} We prove upper and lower bounds for
 leading coefficient of Kolchin dimension
 polynomial of systems of partial linear differential equations
in the case of codimension two, squared
by the orders of the equations  in the system.
A notion of typical differential dimension plays 
an important role in
differential algebra, some its estimations
were proved by J.Ritt and E.Kolchin, also they
advanced 
several conjectures that were later
refuted. Our bound generalizes
the analogue of B\"ezout theorem, which has been proved in~\cite{KLMP}
for one differential indeterminate. It is better, than 
estimation, proved by  D.Grigoriev in~\cite{Grig}.

{\bf 
Keywords:}
differential algebra, differential polynomials,
Kolchin dimension polynomial, typical differential dimension,
ring of differential operators, excellently filtered module.

\end{abstract}

\maketitle
\thispagestyle{empty}

\section{Introduction}
One of the basic objects of stude in differential algebra is the
differential dimension polynomial introduced by E.Kolchin~\cite{Kolchin}. 
This is an analogue of dimension in algebraic
geometry, and estimations of coefficients of the dimension polynomial
are  classical unsolved problems of differential algebra.
If the characteristic set  of a prime differential
ideal in the ring of differential polynomials is known,
the task of finding of
the dimension polynomial is
 a simple combinatorial problem, which can be solved
by a number of  algorithms (see, for example, \cite{KLMP}).
It follows from these algorithms, that the dimension polynomial's
leading coefficient polynomially
depends on orders of elements in the characteristic set  of a prime differential component. 
But for finding  characteristic sets we apply
algorithms (different variants of Rosenfeld-Gr\"obner's
algorithm,
see for example~\cite{Boul, Hub}),
complexity of which 
are unknown. In case of linear equations
a characteristic set is Gr\"obner basis of  module of K\"ahler differentials, 
and in \cite{Janet} was proved an upper double exponential
estimation on orders of elements of characteristic set. This result 
generalizes  an upper bound  for  orders of Gr\"obner basis 
of polynomial ideal (see, for example, \cite{Dube}). We will not use
for the estimation of leading coefficient of dimension polynomial 
a bound for elements of characteristic set, and in case of 
codimension two we will get more exact,
than in~\cite{Grig} estimation.
Note, that it is known double exponential lower bound for
orders  Gr\"obner basis' elements (\cite{Mayr}),
but still there is not an analogical estimation 
for the coefficients of Kolchin dimension  polynomial. Therefore 
an example \ref{be1} is important, since 
it gives a squared lower bound  
in a codimension 2.

In the case of  nonlinear systems Kolchin was proved
an estimation of leading coefficient if a 
degree of dimension polynomial
is  less on 1 than  the number of differentiations (i.e. in case of codimension 1).
In \cite{KLMP} (see  p. 265) Kolchin's conjecture    
concerned with an estimation in a codimension more than 1 and 
some other polynomial hypotheses were refuted.
In \cite{Kondr} the rough estimation of leading coefficient 
at any value of differential dimension, including nonlinear case,
based on the Ackermann function   was proved.
Whether it is possible to prove an  upper
double exponential estimation for the nonlinear systems, still
is open problem.

\section{Preliminary facts.}

One can find 
basic concepts and facts  in~\cite{Kolchin, Ritt, KLMP}.

Denote   the set  of integers by $\Z$, 
non-negative integers by $\N_0$  and   binomial coefficients by $\binom{n}{k}$.
For $e=(j_1,\dots, j_m) \in\N_0^m$, the order of  $e$ is defined by 
$\ord e=\sum_{k=0}^mj_k$. 
Note that any numerical 
polynomial $v(s)$ can be written as $v(s) =\sum_i^da_i\binom{s+i}{i}$,
where $a_i\in\Z$. We call numbers $(a_d,\dots, a_0)$ {\bf
standard coefficients } of polynomial $v(s)$.

Now  we define the Kolchin dimension polynomial
of a subset $E\subset \N_0^m$.
Regard the following partial order on $\N_0^m$: the relation
$(i_1,\dots, i_m)\le(j_1,\dots, j_m)$ is equivalent to
$i_k\le j_k$ for all $k=1,\dots, m$.
We  consider a function $\omega_E(s)$, that   in a point $s$ 
equals $\Card V_E(s) $, where $V_E(s)$ is the set
of points $x\in\N_0^m$ such that $\ord x\le s$ 
and for every $e\in E$ the condition $e\le x$ isn't true.
Then (see for example, \cite{Kolchin}, p.115, or \cite{KLMP}, 
theorem 5.4.1 ) function $\omega_E(s)$ for all sufficiently large 
$s$ is a numerical polynomial. We call this polynomial
the Kolchin dimension polynomial of a subset $E$.

\begin{definition}\label{3.2.1} An operator $\partial$ 
on a
commutative ring
$\K$ with unit is called
{\bf  a derivation 
} 
if it is linear
$\partial(a+b) =\partial(a)+\partial(b)$ and the Leibniz's rule  
$\partial(ab) =\partial
(a)b+a\partial(b)$ holds for all elements $a, b\in \K$.

{\bf A differential ring} (or $\Delta$-ring) is a 
ring $\K$ endowed with a set of derivations
$\Delta=\{\partial_1,\ldots,\partial_m\}$
which commute pairwise. 

Construct the multiplicate monoid $$\Theta =\Theta(\Delta) = \left
\{\partial_1^{i_1}\cdot\ldots\cdot\partial_m^{i_m}\:\big|\: i_j\Ge 0,\ 1\Le j\Le m\right\}$$
of {\bf derivative operators}.

If $\theta= \partial_1^{i_1}\cdot\ldots\cdot\partial_m^{i_m}$
we  define {\bf order of derivative operator $\theta$}:
$$\ord(\theta) = i_1+\ldots+i_m\
\text {and}\
\Theta(r) =\{\theta\in\Theta| \ \ord(\theta) \le r\}.
$$ Let
$$R=\K\{y_j\:|\: 1\Le j\Le n\} := \K[\theta y_j\:|\: \theta\in\Theta, 1\Le j \Le n]\
$$
be a ring of commutative polynomials with coefficients in $\K$ in the 
infinite set of 
variables 
$\Theta Y=\Theta(y_j)_{j=1}^n$, and
$$R_r = \K\big[\Theta(r) y_j\big],\quad r\Ge 0.$$  
A ring $R$ is called a {\bf ring of differential polynomials
} in  differential indeterminate $y_1,\ldots, y_n$ 
over
$\K$.
\end{definition}
$R$ also is $\Delta-$ring.
Below we consider the case  when $\K$ is the differential field $\F$
and $\Char\F=0$ only. An ideal $I$ in $\F\{y_1,\ldots,y_n\}$ is 
called {\bf
  differential}, if $\partial
f\in I$ for all $f\in I$ and $\partial\in\Delta$ .

Let $\Sigma \subset \F\{y_1,\dots,y_n\}$ be a set of differential
polynomials. For the differential and radical differential
ideal generated by $\Sigma$ in $\F\{y_1,\dots,y_n\}$, we use
notations $[\Sigma]$ and $\{\Sigma\}$, respectively.
\begin{definition}\label{ran}
A {\bf ranking} 
is a total order $>$ on the set $\Theta Y$
satisfying the following conditions: for all $\theta\in\Theta$ and
$u,v\in\Theta Y$:
\begin{enumerate}
\item $\theta u \Ge u,$
\item $u \Ge v \Longrightarrow \theta u \Ge \theta v.$
\end{enumerate}
A ranking $>$ is called
{\bf orderly} if $\ord u > \ord v$ implies $u > v$ for all
derivatives $u$ and $v$. 

\end{definition}
A differential polynomial $f\in R$ is called {\bf linear}, 
if its degree (as a polynomial
in  variables $\theta y_j\:|\: \theta\in\Theta, 1\Le j \Le n$) is equal to 1.
A system  $\Sigma$  is called the 
system of linear differential equations, 
if every element $\Sigma$ is linear.
Let $u$ be a derivative,
that is, $u = \theta y_j$ for
$\theta = \partial_1^{i_1}\cdots\partial_m^{i_m} \in \Theta$
 and $1\Le j \Le n$. The {\bf order} of $u$ is defined as
$$\ord u=\ord\theta=i_1+\ldots+i_m.$$ If $f$ is a differential
polynomial, $f\not\in\F$, then $\ord f$ denotes the maximal order of
derivatives appearing effectively in $f$.

\begin{definition}\label{3.2.38}
Let $\F$ be a differential field with a set of derivations
$\Delta = \{\partial_1,\ldots,\partial_m\}$.
The ring $D=\F[\partial_1,\dots,\partial_m]$ of skew polynomials in 
indeterminates $\partial_1,\dots,\partial_m$ with coefficients 
in $\F$ and the commutation rules 
$\partial_i \partial_j=\partial_j \partial_i,\ \partial_i a=
a\partial_i+ \partial_i(a)$  for all
$a\in \F,\ \partial_i,\partial_j \in \Delta$ is called 
a {\bf (linear) differential ($\Delta$-) operator ring}.
\end{definition}
In particular, if derivation operators are trivial on $\F$, then $D$ is
isomorphic to the commutative polynomial ring with the same generators.

Every element $\sigma$ of $D$ may be uniquely represented as a
finite sum
$$
  \sigma=\sum_{\theta\in T(\Delta)}a_\theta\theta=\sum_{i_1,\dots,i_m}
   a_{i_1,\dots,i_m} \partial_1^{i_1},\dots,\partial_m^{i_m}.
$$

The maximal value of $\ord\theta$ among all $\theta$ for which $a_\theta\neq
0$, is called the {\bf order} of $\sigma$, it is 
denoted by $\ord\sigma$.

Let $D$~ be the ring of linear differential operators over the field 
$\F$. Consider on $D$ an ascending filtration
$(D_r)_{r\in\Z}$, where $D_r=\{f\in D\ |\ \ord f\leq r\}=
\F\cdot \Theta(r)$ for
$r\geq 0$, and $D_r=0$ for $r<0$.

By a {\bf filtered  $D$-module} we shall mean a $D$-module $M$ with
exhaustive and separable filtration $(M_r)_{ r\in\Z}$. It means that
$M=\bigcup_{r\in\Z}M_r$ and there exists $r_0\in\Z$ such that $M_r=0$ for all
$r<r_0$, $M_i\subseteq M_{i+1}$ and $D_i M_r\subseteq M_{r+i}$ for all
$r,i\in\Z$

\begin{definition}\label{D5.1.1}
Let $M$ be a filtered $D$-module with a filtration 
$(M_r)_{r\in\Z}$ and suppose that $M_r$ are finitely generated over 
$\F$ for any $r\in\Z$. Then we say that the filtration 
$(M_r)_{r\in\Z}$ is {\bf finite} and we call $M$ a
{\bf finitely filtered $D$-module}.

If there exists an integer $r_0\in\Z$ such that $M_s=D_{s-r_0}M_{r_0}$ for all $s>r_0$,
then the filtration $(M_r)_{r\in\Z}$ is called {\bf good}, and $M$ is 
called a {\bf good filtered $D$-module}.

A finite and good filtration of a $D$-module $M$ is called 
{\bf excellent}. In this
case $M$ is called an {\bf excellently filtered} $D$-module. 
\end{definition}

\begin{example}\label{5.1.7}
Let $M$ be a finitely generated $D$-module, and $\{m_i\}_{i\in I}$ be a
finite system of its generators. The filtration $M_r=\sum_{i\in I}D_rm_i$
is called {\bf associated} with these
generators. It is excellent.
\end{example}

\begin{example}\label{5.1.8}
Let $M$ be an excellently filtered $D$-module and $N$ be a 
submodule of $M$. Consider the
{\bf induced} filtration on $N$, $N_r=N\cap M_r$. 
According to a proposition 5.1.15 (see ~\cite{KLMP}),
the induced filtration also is excellent.
\end{example}

We will define now the {\bf Hilbert function } 
of filtered $D$-module as
$\chi(r) =\dim_{\F}M_r$. A next fact is well-known 
(see, for example, \cite{KLMP},
theorem 5.1.11). 
The characteristic Hilbert function of  excellently filtered
module 
for all sufficiently large $r\in\N$
is  the polynomial of degree less than or equal to $m$.
This numerical polynomial $\omega_M(s)$
is called 
{\bf Kolchin dimension polynomial}.
The degree $d=\deg(\omega_M) $ of Kolchin dimension polynomial  
is called
{\bf  a differential type}
of module $M$, the difference $(m - d)$ -- {\bf codimension},
and standard leading coefficient $a_{d}(\omega_M)$ -- {\bf  a typical
differential dimension.}

\begin{proposition}\label{rk}(see 5.2.12(\cite{KLMP}))
Let $\F$ be a differential field with a basic set $\Delta=\{\partial_1,\ldots,\partial_ m\}$,
$D$ be the ring of $\Delta$-operators over $\F$.
If $_DM$ is a finitely generated $D$-module, then for any excellent
filtration its 
$m$-standart coefficient $a_m(\omega_M)$ is equal to the maximal number of elements of $M$ 
which are linearly independent over $D$ (i.e. 
$a_m(\omega_M) =\rk_DM$.)
\end{proposition}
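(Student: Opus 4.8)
The plan is to squeeze $M$ between a free submodule and a torsion quotient, computing $a_m$ in those two extreme cases and gluing by additivity of the dimension polynomial. Write $t=\rk_D M$; since $M$ is finitely generated, $t$ is finite (it is at most the number of generators). I will establish $a_m(\omega_M)=t$.

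A preliminary reduction I will need: $a_m(\omega_M)$ does not depend on the chosen excellent filtration. Indeed, if $(M_r)_{r\in\Z}$ and $(M'_r)_{r\in\Z}$ are two good filtrations, then using that each $M'_{r}$ is finite-dimensional over $\F$ and that $(M'_r)$ is good, one obtains a constant $c$ with $M_{s-c}\subseteq M'_s\subseteq M_{s+c}$ for all large $s$; hence $\omega_M(s-c)\le\omega_{M'}(s)\le\omega_M(s+c)$ for $s\gg 0$, which forces $\omega_M$ and $\omega_{M'}$ to have the same degree and the same leading coefficient, i.e. the same $m$-standard coefficient (this is the standard comparison of good filtrations, cf. \cite{KLMP}). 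So whenever only $a_m$ is at stake I may replace any excellent filtration by a more convenient one.

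Now choose $m_1,\dots,m_t\in M$ linearly independent over $D$; they span a free submodule $F=\bigoplus_{i=1}^{t}Dm_i\cong D^{t}$. Fix an excellent filtration $(M_r)$ on $M$: the induced filtration $F_r=F\cap M_r$ is excellent (Example \ref{5.1.8}), and the quotient filtration $(M/F)_r=(M_r+F)/F$ is finite and good (from $M_s=D_{s-r_0}M_{r_0}$ one gets $(M/F)_s=D_{s-r_0}(M/F)_{r_0}$), hence excellent. For each $r$ the sequence $0\to F_r\to M_r\to (M/F)_r\to 0$ of $\F$-spaces is exact, so $\dim_\F M_r=\dim_\F F_r+\dim_\F(M/F)_r$ for all $r$, whence $\omega_M=\omega_F+\omega_{M/F}$ and $a_m(\omega_M)=a_m(\omega_F)+a_m(\omega_{M/F})$. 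By the preliminary reduction I compute $a_m(\omega_F)$ with the filtration associated with $m_1,\dots,m_t$, namely $F_r=\bigoplus_{i=1}^t D_rm_i$, so $\dim_\F F_r=t\binom{r+m}{m}$, $\omega_F=t\binom{s+m}{m}$, and $a_m(\omega_F)=t$. It remains to show $a_m(\omega_{M/F})=0$. First, $M/F$ is a torsion $D$-module: if some $x\in M$ had a non-torsion image, then $m_1,\dots,m_t,x$ would be $D$-linearly independent, contradicting $\rk_D M=t$. Thus it suffices to prove $a_m(\omega_N)=0$ for an arbitrary finitely generated torsion module $N$. Pick generators $n_1,\dots,n_p$ of $N$ with $0\neq\sigma_i\in D$ annihilating $n_i$; sending the $i$-th standard generator to $n_i$ gives a surjection of filtered modules $\bigoplus_{i=1}^{p}D/D\sigma_i\twoheadrightarrow N$, where each summand carries the filtration associated with its generator. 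Since $D$ is a domain, right multiplication by $\sigma_i$ embeds $D_{r-\ord\sigma_i}$ into $D_r\cap D\sigma_i$, so $\dim_\F(D/D\sigma_i)_r\le\binom{r+m}{m}-\binom{r-\ord\sigma_i+m}{m}$, a polynomial in $r$ of degree $\le m-1$. Pushing through the surjection, $\dim_\F N_r$ is eventually bounded by a polynomial of degree $\le m-1$, hence $\deg\omega_N\le m-1$ and $a_m(\omega_N)=0$. Taking $N=M/F$ yields $a_m(\omega_M)=t+0=\rk_D M$.

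I expect the delicate points to be exactly two: the filtration-independence of $a_m$ (the whole polynomial is \emph{not} an invariant of the module, only its degree and leading coefficient, so the squeezing estimate between two good filtrations must be carried out carefully), and the verification that the quotient filtration on $M/F$ is again excellent. Once these are in place, the remaining content is a routine count of $\F$-dimensions of $\Theta(r)$-spanned subspaces.
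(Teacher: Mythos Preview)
The paper does not prove this proposition; it is quoted without proof from \cite{KLMP}, Proposition~5.2.12, and used only as background. So there is no in-paper argument to compare against.

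Your proof is correct and is in fact the standard route to this result: sandwich $M$ between a free submodule $F\cong D^{t}$ (with $t=\rk_D M$) and the torsion quotient $M/F$, use additivity of $\dim_\F(\cdot)_r$ on the short exact sequence of filtered pieces, compute $a_m(\omega_F)=t$ from $\dim_\F D_r=\binom{r+m}{m}$, and show $a_m$ vanishes on finitely generated torsion modules by bounding $\dim_\F(D/D\sigma)_r$ by a polynomial of degree $\le m-1$. The two points you flag as delicate---invariance of $a_m$ under change of excellent filtration and excellence of the quotient filtration on $M/F$---are precisely where the substance lies, and both are handled by the general filtration lemmas you cite (cf.\ Proposition~5.1.15 and neighbouring results in \cite{KLMP}). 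One small remark worth making explicit: your embedding $D_{r-\ord\sigma_i}\hookrightarrow D_r\cap D\sigma_i$ by right multiplication relies on $\ord(\lambda\sigma_i)=\ord\lambda+\ord\sigma_i$, which holds because the associated graded ring of $D$ with respect to the order filtration is the commutative polynomial ring $\F[\xi_1,\dots,\xi_m]$, hence a domain.
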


Let $M$ be a free $D$-module generated by $m_1,\dots, m_n$,
consider  the associated filtration 
(see an example \ref{5.1.7}).
Every element $f\in M$ can be expressed as
$f=\sum_{1\le j\le n}\sigma_jm_j$, where $\sigma_j\in D$. 
Set
$\ord_{m_j}f=\ord \sigma_j$ and $\ord f=\max_{1\le j\le n}
(\ord\sigma_j)$.

Let $N$ be a submodule of $M$, generated by elements
$\Sigma\subset N$ and
 $\ord_{m_j} f\le e_j$ for all  $j=1,\dots,n,\ f\in \Sigma$.
By the example \ref{5.1.8} the inducing filtration
$N$ is an excellent, therefore
the filtration of factor-module $M/N=(M_r/N_r)_{r\in\Z}$
is exellent also. Thus, there exists 
Kolchin's polynomial  $\omega_{M/N}$. Sometimes this polynomial 
is called the dimension polynomial
of system $\Sigma$ and is denoted by
$\omega_{[\Sigma]}$.

By the theorem  4.3.5\cite{KLMP}, 
using the theory of Gr\"obner basis,
we have for any orderly ranking on $M$
$\omega_{[\Sigma]}(s)=\sum_{j=1}^n \omega_{E_j}(s)$, 
where $E_j\subset \N_0^m$.
It easy to see, that if the system $\Sigma$ has a codimension 0,
its typical $\Delta$-dimension does not exceed $n$.

We are interested in following  
\begin{question}\label{1}
Let we know maximal orders $e_1,\dots, e_n$.
How to estimate a typical differential dimension $\Sigma$?
\end{question}
Firstly this question was asked by J.Ritt
for  ordinary differential systems. Later
E.Kolchin decided this problem in a codimension 1 even for nonlinear
systems.
\begin{theorem} (see \cite{Kolchin}, p.199)\label{Kol}
Let $\Sigma\subset\F\{y_1,\dots, y_n\}$, 
$\ord_{y_j} f\le e_j$ for all $f\in\Sigma,\ 1\le j\le n$
and $\rho$ be a prime component of $\{\Sigma\}$.
If the differential type of  $\rho$ is $m-1$, then  the
typical differential dimension $a_{m-1}$ of 
$\rho$ does not exceed  $e_1+ \dots +e_n$.
\end{theorem}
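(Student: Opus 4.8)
The plan is to reduce the estimate on the typical differential dimension to a purely combinatorial inequality about Kolchin dimension polynomials of subsets of $\N_0^m$, and to obtain that inequality by a deformation/ordering argument on a characteristic set of $\rho$. First I would fix an orderly ranking on $\F\{y_1,\dots,y_n\}$ and let $A=\{A_1,\dots,A_t\}$ be the characteristic set of the prime differential ideal $\rho$ with respect to this ranking. Denote by $u_k=\theta_k y_{j_k}$ the leader of $A_k$; let $E_j\subset\N_0^m$ be the set of exponents $e$ such that $\theta y_j$ with $\ord\theta=\ord e$ is a derivative of some leader $u_k$ with $j_k=j$ (more precisely, the set of $e$ for which the corresponding derivative of $y_j$ is held by some $A_k$). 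By the theorem $4.3.5$ of \cite{KLMP} quoted in the excerpt, $\omega_\rho(s)=\sum_{j=1}^n\omega_{E_j}(s)$, and the differential type of $\rho$ is $m-1$ exactly when at least one $E_j$ is nonempty but no $E_j$ ``fills up'' a coordinate hyperplane's worth of codimension—equivalently when $\bigcup_j E_j$ has combinatorial dimension $m-1$ in the sense of the Kolchin polynomial. The coefficient $a_{m-1}(\omega_\rho)$ is then the sum of the corresponding $(m-1)$-standard coefficients $a_{m-1}(\omega_{E_j})$.

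Next I would prove the key one–indeterminate estimate: if $E\subset\N_0^m$ is nonempty and $\omega_E$ has degree $m-1$, then its leading standard coefficient is at most $\min\{\ord e : e\in E\}$. This is the $n=1$ version of the statement; for a single differential polynomial of order $e_j$ in $y_j$, the leader has order $\le e_j$, so its set of exponents contains a point of order $\le e_j$ lying on a coordinate axis direction, and the standard computation of $\omega_E$ for such an $E$ (the complement, inside the simplex $\ord x\le s$, of the ``upper set'' generated by $E$) gives a degree-$(m-1)$ polynomial whose leading coefficient counts, asymptotically, the number of lattice points at bounded distance from the relevant facet—this is $\le$ the order of the generating point. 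Summing over $j=1,\dots,n$, and using that $\ord u_k\le\ord A_k\le e_{j_k}$ for the leader held in the $y_{j_k}$-component, yields $a_{m-1}(\omega_\rho)\le\sum_{k}\ord u_k$ grouped by indeterminate, hence $\le e_1+\dots+e_n$, with the convention that indeterminates not appearing as leaders contribute $0$.

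The main obstacle I anticipate is the nonlinearity: unlike the linear case, a characteristic set of a prime component of $\{\Sigma\}$ need not consist of (derivatives of) the original $f\in\Sigma$, and its elements' orders are not controlled a priori by the $e_j$. The crucial input here is that for an \emph{autoreduced} set that is a characteristic set of a prime ideal, each leader $u_k$ is a derivative of some $y_{j}$ by an operator of order at most $e_{j}$—this follows because $\rho$ is a component of $\{\Sigma\}$, so some $f\in\Sigma$ reduces to $0$ modulo $A$, forcing its leader (order $\le e_j$ in $y_j$) to be a derivative of some $u_k$; careful bookkeeping shows one can choose the $E_j$ so that each contains a point of order $\le e_j$. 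I would therefore spend the bulk of the argument on this order-control step, after which the combinatorial lemma and the additivity $\omega_\rho=\sum_j\omega_{E_j}$ finish the proof immediately.
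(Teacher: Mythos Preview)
First, note that the paper does not actually prove this theorem: it is quoted from Kolchin's book (\cite{Kolchin}, p.~199) and stated without proof, as background for the paper's own codimension-$2$ results. So there is no ``paper's proof'' to compare against.

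That said, your plan contains a genuine gap. The combinatorial lemma you isolate --- if $E\subset\N_0^m$ is nonempty and $\deg\omega_E=m-1$ then $a_{m-1}(\omega_E)\le\min_{e\in E}\ord e$ --- is correct (it follows from $\omega_E\le\omega_{\{e\}}$ and $a_{m-1}(\omega_{\{e\}})=\ord e$). What fails is the claim that each nonempty $E_j$ contains a point of order $\le e_j$. Your reduction argument only shows that for every $f\in\Sigma$, with leader in $y_{j(f)}$, the set $E_{j(f)}$ acquires a point of order $\le e_{j(f)}$; it gives no information about indices $j$ that are never of the form $j(f)$, and no choice of orderly ranking repairs this. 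Concretely, take $m=n=2$ and
\[
\Sigma=\{\partial_1^{2} y_1 - y_2,\ \partial_2^{2} y_1 - y_2\},
\]
so that $e_1=2$, $e_2=0$. The ideal is linear, hence prime, and for any orderly ranking both elements of $\Sigma$ have leader in $y_1$. The Gr\"obner basis is $\{\partial_1^{2} y_1 - y_2,\ \partial_2^{2} y_1 - y_2,\ \partial_1^{2} y_2-\partial_2^{2} y_2\}$, giving $E_1=\{(2,0),(0,2)\}$ and $E_2=\{(2,0)\}$. Then $\min_{e\in E_2}\ord e=2>0=e_2$, so the per-$j$ inequality $a_{m-1}(\omega_{E_j})\le e_j$ is false for $j=2$. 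The theorem nevertheless holds here ($a_{1}=2=e_1+e_2$), but only because the slack in the $j=1$ term exactly compensates the overshoot in the $j=2$ term. This redistribution across indeterminates is not ``careful bookkeeping''; it is the actual content of the proof, and your outline provides no mechanism for it.

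Kolchin's argument (loc.\ cit.) is global rather than per-indeterminate: one compares the characteristic set $A$ of $\rho$ with an autoreduced subset extracted from $\Sigma$ itself, uses that $A$ has lower rank in the ordering on autoreduced sets, and obtains an inequality on the \emph{sum} of the leader orders directly. Your proposal would need a device of this kind to close the gap.
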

Note that for a system of linear differential equations an ideal
$\rho=[\Sigma]$  is prime, and Kolchin dimension polynomial
 $\omega_\rho$ coincides with a dimension polynomial
exellently filtered module of K\"ahler differentials. 
Below, for a  differential linear  system 
$\Sigma\subset\F\{y_1,\dots, y_n\}$,
we always will mean the exellently filtered module of differentials
$M/N$, equating $m_j$ with $\delta(y_j) $.

Now consider systems with  codimension 2.
\begin{theorem}(see 5.6.7, \cite{KLMP})\label{Bezy}
Let in the conditions of question \ref{1} $n=1$.
If  filtered $D$-module $M/N$ has a codimension 
 2, then
$a_{m-2}(\omega_{M/N})\le e_1^2$. 
This estimation is reachable.
\end{theorem}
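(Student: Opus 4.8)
Since $n=1$ we identify $M$ with the free rank-one module $D=D\cdot\delta(y_1)$ and $N$ with the left ideal generated by operators $P_1,\dots,P_k$ with $\ord P_j\le e_1$. Pass to the associated graded ring $R=\operatorname{gr}D=\F[\xi_1,\dots,\xi_m]$ for the order filtration; then $\operatorname{gr}(M/N)=R/\operatorname{in}(N)$, where $\operatorname{in}(N)$ is the ideal of symbols, and $\omega_{M/N}(s)=\sum_{t\le s}\dim_\F\bigl(R/\operatorname{in}(N)\bigr)_t$ is the cumulative Hilbert function of this graded module. The codimension $2$ hypothesis says exactly that $R/\operatorname{in}(N)$ has Krull dimension $m-2$, i.e. the characteristic variety $\operatorname{Proj}\bigl(R/\operatorname{in}(N)\bigr)$ has dimension $m-3$, and the typical differential dimension $a_{m-2}(\omega_{M/N})$ equals its degree, that is $\sum_Z\operatorname{mult}_Z\bigl(R/\operatorname{in}(N)\bigr)\cdot\deg Z$ over the components $Z$ of dimension $m-3$. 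Every nonzero homogeneous element of $\operatorname{in}(N)$ vanishes on the characteristic variety, and $\operatorname{in}(N)$ contains the symbols $\operatorname{in}(P_1),\dots,\operatorname{in}(P_k)$, which are forms of degree $\le e_1$. So the task is a B\'ezout-type bound: show this degree is $\le e_1^2$.

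Assume first that the forms $\operatorname{in}(P_j)$ have no common factor. I would take $\sigma_1=\operatorname{in}(P_1)$ and, by graded prime avoidance over the infinite field $\F$ inside the degree-$e_1$ component of the ideal generated by $\operatorname{in}(P_1),\dots,\operatorname{in}(P_k)$, choose a form $\sigma_2\in\operatorname{in}(N)$ with $\deg\sigma_2\le e_1$ lying in no minimal prime of $\sigma_1$; this is possible precisely because no irreducible factor of $\sigma_1$ divides all the $\operatorname{in}(P_j)$ and $\operatorname{in}(N)$ has codimension $2$. Then $\sigma_1,\sigma_2$ is a regular sequence, $R/(\sigma_1,\sigma_2)$ is a complete intersection of codimension $2$ and degree $\deg\sigma_1\cdot\deg\sigma_2\le e_1^2$, and since $\operatorname{Proj}\bigl(R/\operatorname{in}(N)\bigr)\subseteq V(\sigma_1,\sigma_2)$ with the latter pure of codimension $2$, comparing Hilbert polynomials along the surjection $R/(\sigma_1,\sigma_2)\twoheadrightarrow R/\operatorname{in}(N)$ yields $a_{m-2}(\omega_{M/N})\le\deg R/(\sigma_1,\sigma_2)\le e_1^2$.

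The main obstacle is the degenerate case where $g:=\gcd_j\operatorname{in}(P_j)$ has positive degree $\gamma$; write $\operatorname{in}(P_j)=g\,h_j$ with $\gcd_j h_j=1$ and $\deg h_j\le e_1-\gamma$. I would split the dimension-$(m-3)$ components $Z$ of the characteristic variety by whether $Z\subseteq V(g)$. Off $V(g)$ the localization $\operatorname{gr}(M/N)[1/g]$ is a quotient of $R[1/g]/(h_1,\dots,h_k)$, so the previous paragraph applied to the $h_j$ bounds the contribution of those $Z$ by $(e_1-\gamma)^2$. For $Z\subseteq V(g)$ one factors $g$ into irreducibles $g_s$ of degrees $a_s$ (with $\sum_s a_s\le\gamma\le e_1$) and, on each hypersurface $V(g_s)$, estimates the codimension-one cycle cut out by the restrictions $h_j|_{V(g_s)}$ — not all zero — together with Kolchin's codimension-one bound (Theorem~\ref{Kol}) for the subsystem supported on that component, obtaining a contribution of at most $a_s(e_1-a_s)$; repeated factors of $g$ are handled by passing to the $g$-torsion filtration of $\operatorname{gr}(M/N)$. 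A short arithmetic estimate then gives $(e_1-\gamma)^2+\sum_s a_s(e_1-a_s)\le e_1^2$. Making this bookkeeping precise, so that no component is double counted, is the technical heart.

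Finally, the bound is attained: for $\Sigma=\{\partial_1^{e_1}y_1,\ \partial_2^{e_1}y_1\}$ one has $N=D\partial_1^{e_1}+D\partial_2^{e_1}$, $\operatorname{in}(N)=(\xi_1^{e_1},\xi_2^{e_1})$ is a complete intersection, the standard monomials are the $\partial_1^{i_1}\cdots\partial_m^{i_m}$ with $i_1,i_2<e_1$, and $\omega_{M/N}(s)=e_1^2\binom{s+m-2}{m-2}+(\text{lower degree})$, so $a_{m-2}(\omega_{M/N})=e_1^2$.
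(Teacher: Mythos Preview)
The paper does not give its own proof of Theorem~\ref{Bezy}; the result is simply quoted from \cite{KLMP}, Theorem~5.6.7, and then used as a black box in the proof of the final theorem. So there is no argument in the present paper to compare your attempt against.

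That said, a few comments on your sketch. Passing to $\operatorname{gr}D=\F[\xi_1,\dots,\xi_m]$ and identifying $a_{m-2}(\omega_{M/N})$ with the multiplicity of $R/\operatorname{in}(N)$ is the right move, and your treatment of the case $\gcd_j\operatorname{in}(P_j)=1$ (choose $\sigma_1=\operatorname{in}(P_1)$, then use prime avoidance in degree $e_1$ to find $\sigma_2$, compare with the complete intersection) is clean and correct. The sharpness example $\{\partial_1^{e_1}y_1,\partial_2^{e_1}y_1\}$ is also fine.

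The gap is in the degenerate case $\gamma=\deg g>0$. There the ideal $(\operatorname{in}(P_1),\dots,\operatorname{in}(P_k))$ has height~$1$ while $\operatorname{in}(N)$ has height~$2$, so $\operatorname{in}(N)$ necessarily contains homogeneous elements not expressible in terms of the $\operatorname{in}(P_j)$ alone, and you have no control over their degrees from the data $e_1$. Your proposed bound $a_s(e_1-a_s)$ for the contribution of components $Z\subseteq V(g_s)$ is not justified: that quantity bounds the degree of the \emph{set} $V(g_s)\cap V(h_1,\dots,h_k)$, but what you need is $\sum_{Z\subseteq V(g_s)}\operatorname{mult}_Z\bigl(R/\operatorname{in}(N)\bigr)\deg Z$, and the multiplicities $\operatorname{mult}_Z\bigl(R/\operatorname{in}(N)\bigr)$ depend on the full initial ideal, not just on the $h_j$. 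Invoking Theorem~\ref{Kol} here is a category error: that theorem concerns differential prime components of a radical differential ideal, not lengths of graded modules over a commutative polynomial ring localized along a hypersurface, and there is no mechanism in your sketch that converts one into the other. The ``$g$-torsion filtration'' remark does not repair this. So as written the degenerate case is a genuine hole, not merely bookkeeping, and you would need a different idea (for instance, producing a second element of $N$ of controlled order whose symbol is coprime to $\operatorname{in}(P_1)$, or an induction on $\gamma$) to close it.
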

Note that theorem ~\ref{Bezy}  generalizes classic
theorem of B\"ezout, which asserts that 
if derivation operators are trivial on
the field $\F$ (i.e. $D$ is a ring of commutative polynomials),
and all elements of $\Sigma$ are homogeneous, then 
$a_{d}\le h^{m - d}$, where
$d$ is a degree of characteristic Hilbert polynomial, 
$h=\max_{1\le j\le n}e_j$.

Note that in  \cite{Grig} was found the
following 
estimation for a typical differential dimension in any codimension: 
$a_{d}\le n(4m^2nh) ^{4^{m - d - 1}(2(m - d))}$.
However a lower estimation is unknown and whether 
exists a polynomial
bound is still open problem.

\section{Basic results.}

We are going to prove an estimation of a typical 
$\Delta$-dimension in a codimension 2 for $n>1$. 
At first, we  consider an example that gives a lower estimation.
\begin{example}\label{be1}
Consider the system $\Sigma$ of partial linear differential equations.
\begin{tabular}
{llllllll}
$\partial_1^{e_1} m_1=$&0;&&&&&&\\
$\partial_2^{e_1} m_1=$&$\partial_1^{e_2} m_2$;&&&&&&\\ 
&$\partial_2^{e_2} m_2=$&$\partial_1^{e_3} m_3$;&&&&&\\ 
&&$\partial_2^{e_3} m_3=$&$\partial_1^{e_4} m_4$;&&&&\\
&&$\dots$&&$\dots$;&&\\
&&&&$\partial_2^{e_i} m_i=$&$\partial_1^{e_{i+1}} m_{i+1};$&&\\
&&&&$\dots$&&$\dots$;\\
&&&&&&$\partial_2^{e_{n-1}} m_{n-1}=$&$\partial_1^{e_n} m_n$;\\ 
&&&&&&&$\partial_2^{e_n} m_n=0$.\\
\end{tabular}
\end{example}

\begin{proposition}
We have for the above system $\Sigma$ (see  example \ref{be1}):
$$\omega_{[\Sigma]}(s)=
\sum_{j_1+ \dots+ j_n=2
}e_{1}^{j_1}\dots e_{n}^{j_n}
\binom{s+ m - 2}{m - 2}
.$$
\end{proposition}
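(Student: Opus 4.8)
\smallskip
\noindent\emph{Plan of proof.}
Here $M=\bigoplus_{j=1}^{n}Dm_j$ is the free $D$-module on $m_1,\dots,m_n$ with its associated filtration, $N\subseteq M$ is the submodule generated by $f_0=\partial_1^{e_1}m_1$, by $f_i=\partial_2^{e_i}m_i-\partial_1^{e_{i+1}}m_{i+1}$ for $1\le i\le n-1$, and by $f_n=\partial_2^{e_n}m_n$, and $\omega_{[\Sigma]}=\omega_{M/N}$. The plan is to filter $M/N$ by the $D$-submodules $M^{(k)}$ generated by the images of $m_1,\dots,m_k$, so that $0=M^{(0)}\subset M^{(1)}\subset\dots\subset M^{(n)}=M/N$; by Example~\ref{5.1.8} the induced filtration on each $M^{(k)}$, and hence the quotient filtration on each $M^{(k)}/M^{(k-1)}$, is excellent, and since $\dim_\F$ is additive in every filtration degree one gets $\omega_{M/N}=\sum_{k=1}^{n}\omega_{M^{(k)}/M^{(k-1)}}$. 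So everything reduces to identifying the successive quotients.

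The heart of the argument is to show that, writing $E_k=e_k+e_{k+1}+\dots+e_n$, there is an isomorphism of $D$-modules
\[
  M^{(k)}/M^{(k-1)}\;\cong\;D\big/\big(D\partial_1^{e_k}+D\partial_2^{E_k}\big),\qquad 1\le k\le n .
\]
I would prove this by computing the annihilator of the image $\bar m_k$ of $m_k$ in $M^{(k)}/M^{(k-1)}$, i.e.\ by describing all $\alpha\in D$ with $\alpha m_k\in N+Dm_1+\dots+Dm_{k-1}$. Writing such a membership as $\alpha m_k=\sum_{i=0}^{n}c_if_i+\sum_{j<k}d_jm_j$ and equating the coefficients of $m_n,m_{n-1},\dots,m_{k+1}$ to zero (they must vanish, since these generators do not occur on the left-hand side), one solves from the bottom upward: the $m_n$-equation $c_{n-1}\partial_1^{e_n}=c_n\partial_2^{e_n}$ forces $c_{n-1}\in D\partial_2^{e_n}$, because $\partial_1^{e_n}$ and $\partial_2^{e_n}$ have no common right divisor in $D$ (as one sees by passing to $\operatorname{gr} D\cong\F[\xi_1,\dots,\xi_m]$, where $\xi_1$ and $\xi_2$ are coprime); substituting back, the $m_{n-1}$-equation forces $c_{n-2}\in D\partial_2^{e_{n-1}+e_n}$, and so on, the $m_{k+1}$-equation finally forcing $c_k\in D\partial_2^{E_k-e_k}$. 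The remaining $m_k$-equation then reads $\alpha=-c_{k-1}\partial_1^{e_k}+c_k\partial_2^{e_k}$ with $c_{k-1}$ unconstrained, so $\alpha$ runs exactly through $D\partial_1^{e_k}+D\partial_2^{E_k}$; the cases $k=1$ and $k=n$ are the same argument (with the final equation involving $f_0$, respectively with an empty chain of eliminations). Carrying out these successive divisibility steps inside the skew ring $D$, so as to pin the quotients down exactly, is the part I expect to be the main obstacle.

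Granting the isomorphism, it remains to compute $\omega_{D/(D\partial_1^{a}+D\partial_2^{b})}$. The residues of the monomials $\partial_1^{p}\partial_2^{q}\partial_3^{r_3}\cdots\partial_m^{r_m}$ with $p<a$ and $q<b$ form an $\F$-basis of $D/(D\partial_1^{a}+D\partial_2^{b})$, so for all large $s$
\[
  \omega_{D/(D\partial_1^{a}+D\partial_2^{b})}(s)=\sum_{p<a,\ q<b}\binom{s-p-q+m-2}{m-2}=ab\binom{s+m-2}{m-2}+(\text{lower-order terms}),
\]
the lower-order terms being absent precisely when $m=2$ (there the module is finite-dimensional over $\F$ of dimension $ab$, so $\omega$ is the constant $ab$ for every excellent filtration; for general $m$ only the degree $m-2$ and the leading coefficient $ab$ are filtration-independent, and that is all that is needed). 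Summing over $k$ and invoking the elementary identity
\[
  \sum_{k=1}^{n}e_kE_k=\sum_{k=1}^{n}e_k\sum_{\ell=k}^{n}e_\ell=\sum_{1\le k\le\ell\le n}e_ke_\ell=\sum_{j_1+\dots+j_n=2}e_1^{j_1}\cdots e_n^{j_n}
\]
then yields the asserted formula (an identity of polynomials when $m=2$, and the leading term of $\omega_{M/N}$ in general), so that $[\Sigma]$ has codimension $2$ and typical differential dimension $\sum_{j_1+\dots+j_n=2}e_1^{j_1}\cdots e_n^{j_n}$ — the lower bound this example is designed to furnish. One could alternatively read off the same answer from the length-two free resolution $0\to D\to D^{\,n+1}\to D^{\,n}\to M/N\to 0$ whose surjection is $(f_0,\dots,f_n)$ and whose first map embeds the rank-one module of syzygies of the $f_i$, but the bookkeeping of the filtration shifts there is less transparent than the telescoping above.
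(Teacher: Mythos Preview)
Your argument is correct and proceeds along a genuinely different route from the paper's. The paper fixes an orderly ranking with $m_1>m_2>\dots>m_n$, computes the Gr\"obner basis of $N$ by forming successive critical pairs (obtaining $\partial_1^{e_1+\dots+e_j}m_j=0$ for each $j$), and then invokes Theorem~4.3.5 of \cite{KLMP} to write $\omega_{[\Sigma]}=\sum_j\omega_{E_j}$ with $E_j=\{(e_1+\dots+e_j,0,\dots,0),(0,e_j,0,\dots,0)\}\subset\N_0^m$. You instead filter $M/N$ by the submodules $M^{(k)}$ and identify each successive quotient with a cyclic module $D/(D\partial_1^{e_k}+D\partial_2^{E_k})$. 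Note that your decomposition yields the summands $e_k(e_k+\dots+e_n)$, whereas the paper's yields $(e_1+\dots+e_j)e_j$; these are two different groupings of the same sum $\sum_{k\le\ell}e_ke_\ell$. Your approach is more self-contained (no Gr\"obner machinery, no appeal to the KLMP theorem) at the price of the divisibility argument in the skew ring $D$, which you handle correctly by looking at monomial supports.

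You are also more careful than the paper on one point. The quantity $\omega_E(s)$ for $E=\{(a,0,\dots,0),(0,b,0,\dots,0)\}$ equals $\sum_{p<a,\,q<b}\binom{s-p-q+m-2}{m-2}$, which for $m>2$ is \emph{not} literally $ab\binom{s+m-2}{m-2}$ but only agrees with it in the leading term. So the displayed identity in the Proposition is strictly speaking an equality of leading terms (exact only when $m=2$); your remark to this effect is accurate, and since the sole purpose of the example is to exhibit the lower bound $a_{m-2}\ge\sum_{j_1+\dots+j_n=2}e_1^{j_1}\cdots e_n^{j_n}$, this is all that is required.
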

\begin{proof}
Consider the orderly ranking (see definition~\ref{ran}) 
such that $m_1>m_2>,\dots,>m_n$.
For the finding of characteristic set of the system $\Sigma$
(equations are linear,  therefore it is enough to compute the 
Gr\"obner basis),
calculate a critical pair of the first two equations. 
We get $\partial_1^{e_1+ e_2}m_2=0$.
Now we find a critical pair for this equation and 
third equation of the system.
We get an equations $\partial_1^{e_1+e_2+e_3}m_3=0$. For the last
generator will be got
$\partial_1^{e_1+\dots+e_n}m_n=0$.
By the theorem of 4.3.5\cite{KLMP} we have

\begin{align}
&
\omega_{[\Sigma]}(s)=
\omega_{\left(\begin{smallmatrix}e_1 &0&\dots& 0 \\0&e_1&\dots& 0 \end{smallmatrix}\right)}(s)
+\omega_{\left(\begin{smallmatrix}e_1+e_2& 0&\dots& 0 \\0 & e_2&\dots& 0\end{smallmatrix}\right)}(s)
+\dots\notag\\
&
+\omega_{\left(\begin{smallmatrix}e_1+e_2+\dots+e_n& 0&\dots& 0 \\0&  e_n&\dots& 0\end{smallmatrix}\right)}(s)
=e_1^2\binom{s+m-2}{m-2}+(e_1+e_2)e_2\binom{s+m-2}{m-2}+ \dots\notag\\
&
+(e_1+\dots+e_n)e_n\binom{s+m-2}{m-2}=
\sum_{j_1+\dots+j_n=2
}e_{1}^{j_1}\dots e_{n}^{j_n}
\binom{s+m-2}{m-2} \notag
\end{align}
\end{proof}

Thus, the bound of typical differential dimension in a codimension 2 
must to be not lower than
\begin{equation}\label{B}
\sum_{j_1+ \dots+ j_n =2}e_{1}^{j_1}\dots e_{n}^{j_n}.
\end{equation}
This example supports formulated  
 in \cite{KLMP} (see a formula~ (5.6.4)) 
conjecture. It was disproved 
(see an example~5.6.6)  
for a codimension more than 2.
Still it is unknown, whether this conjecture is true in a 
codimension 2.

Now we will prove an upper bound for typical $\Delta$-dimension
that also, as in (\ref{B}), is squared by
orders of the equations in the system $\Sigma$.
We will prove such bound
$$
a_{m-2}(\omega_\Sigma)\le 2^{2(m+1)}(e_1+\dots+e_n)^2.
$$
It is known that if the field $\F$
contains the field of rational functions $\C(x_1,\dots, x_m)$, 
and  $\partial_j(x_j)=1$, then for every $\Delta$-extension $\F$ of 
positive codimension  there exists $\Delta$-primitive element
(see for example, \cite{KLMP}, 5.3.13).

We will prove the constructive variant of this theorem for linear
equations. It has an independent interest.  Namely, in positive codimension 
the linear system in $n$ indeterminates of
order  not  greater than $h$ is
equivalent to the linear system  in one indeterminate of order not
greater than $O(m)(n+1)h$.

\begin{theorem}\label{prim}
Let $\Sigma\subset\F\{y_1,\dots, y_n\}$ be the system of linear
differential equations, $\ord f_{y_j}\le e_j$ for all
$f\in\Sigma$, $1\le j\le n$ and $a_m(\omega_{[\Sigma]}) =0$.

Then in some extension $\G$ of the field $\F$ exist
elements $c_2,\dots, c_n$ such, that module of differentials
$M/N$ of systems $\Sigma$ is generated by one element $M/N=\tilde D\psi$,
where $\psi=m_1+c_2m_2 +\dots +c_nm_n$,
$\tilde D=\G\otimes_\F D$. Denote
$\lambda_j\in\tilde D:\ \lambda_j \psi=m_j$.
Then for any  
$\lambda_j$ we have 
$$
\ord\lambda_j\le 2^{m}(e_1+ \dots+ e_n).
$$
\end{theorem}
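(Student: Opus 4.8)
The plan is to use the fact that $a_m(\omega_{[\Sigma]})=0$ means $\rk_D(M/N)=0$ (Proposition \ref{rk}), i.e.\ $M/N$ is a torsion $D$-module, so for each generator $m_j$ there is a nonzero operator $\mu_j\in D$ with $\mu_j m_j\in N$. The first step is to choose a generic linear combination $\psi=m_1+c_2m_2+\dots+c_nm_n$ over a sufficiently large extension $\G$ of $\F$ and to show that $M/N=\tilde D\psi$. For this I would argue as in the existence proof of a $\Delta$-primitive element (\cite{KLMP}, 5.3.13), but tracking orders: since each $m_j$ is annihilated modulo $N$ by some $\mu_j$, one can, after passing to $\G$, successively solve for each $m_j$ in terms of $\psi$ and the $m_k$ with $k\neq j$, using that the relevant operator coefficients become invertible after the generic choice of the $c_j$. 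The key point is that the ``critical pair'' / elimination process that expresses $m_j$ through $\psi$ only combines the original defining operators of $\Sigma$ (orders $\le e_k$ in $m_k$) a bounded number of times.

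The heart of the bound is the order estimate for the $\lambda_j$. The strategy is to set up, over $\tilde D$, the linear system whose unknowns are the $m_j$: we have $\psi=\sum_j c_j m_j$ (with $c_1=1$), and from the torsion relations we obtain, for each $j$, an equation $\mu_j m_j=\sum_{k}\sigma_{jk}m_k$ where $\ord\sigma_{jk}\le e_k$ and $\ord\mu_j$ is also controlled by the $e_k$ (indeed by $\max_k e_k$ after the elimination among the original equations). Solving this $n\times n$ system over the (non-commutative, but after localizing, skew-field) ring $\tilde D$ by a Cramer-type / Gaussian-elimination argument expresses each $m_j=\lambda_j\psi$ with $\ord\lambda_j$ bounded by the sum of the orders of the entries along an elimination path. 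Each elimination step at worst adds the orders of two operators, and there are at most $m$ (or a constant times $m$) such doubling steps needed to reach a single-generator presentation — this is exactly where the factor $2^m$ appears — while each entry contributes at most $e_1+\dots+e_n$ in total. Combining, $\ord\lambda_j\le 2^m(e_1+\dots+e_n)$.

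The main obstacle will be making the non-commutativity of $D$ harmless: one cannot literally invoke Cramer's rule, and ``order of a product is the sum of orders'' holds in $D$ but ``order of an inverse'' does not make sense in $D$ itself, only after passing to the skew field of fractions $\mathrm{Frac}(\tilde D)$. I would handle this by performing the elimination entirely inside $\tilde D$ (never inverting, only multiplying by the operators $\mu_j$ and by the generic scalars $c_j$), so that at each stage one genuinely has an identity $(\text{operator})\cdot\psi=(\text{operator})\cdot m_j$ with both operators in $\tilde D$ of controlled order, and only at the very end one observes that the coefficient of $\psi$ has become a nonzero scalar of $\G$ (by genericity of the $c_j$), which may then be inverted harmlessly. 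A secondary point requiring care is the bookkeeping showing that the number of elimination rounds is $O(m)$ rather than $O(n)$: this uses that in positive codimension the torsion module $M/N$ already satisfies enough relations ``in few directions'' that the recursion on the number of generators collapses after $m$-many merging steps, each step replacing two generators by one at the cost of doubling the order bound.
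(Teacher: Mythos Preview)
Your proposal has a genuine gap in the derivation of the factor $2^m$. You claim that Gaussian elimination on the $n$ generators ``collapses after $m$-many merging steps, each step replacing two generators by one at the cost of doubling the order bound.'' But replacing two generators by one in a set of $n$ generators takes $n-1$ steps, not $m$; there is no mechanism by which the number of derivations $m$ controls the depth of an elimination among $n$ module generators. An argument of the type you sketch would naturally produce a bound of shape $2^{O(n)}\max_j e_j$, which has the wrong dependence entirely. A related problem is your assumption of torsion relations $\mu_j m_j=\sum_k\sigma_{jk}m_k$ with $\ord\mu_j$ ``controlled by $\max_k e_k$ after elimination among the original equations'': that preliminary elimination is itself the whole difficulty, and no such bound is available a priori.

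The paper's argument is quite different and does no elimination over $\tilde D$ at all. It selects $n$ $D$-independent equations $F_1,\dots,F_n$ from $\Sigma$, adjoins the single relation $m_1+c_2 m_2+\dots+c_n m_n=\psi$ with generic $c_j$, and \emph{prolongs}: for each $s$ one regards all derivatives $\theta F_i$ and $\theta(\sum c_j m_j-\psi)$ with $\ord\theta\le s$ as an ordinary linear system over the field $\F(\Theta c_j,\Theta\psi)$ in the finitely many unknowns $\{\theta m_j:\ord\theta\le s+e_j\}$. There are $(n+1)\binom{s+m}{m}$ equations and $\sum_j\binom{s+e_j+m}{m}$ unknowns; once the former dominates, the system is determined and each $m_j=\lambda_j\psi$ with $\ord\lambda_j\le s$. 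The factor $2^m$ then arises from the elementary estimate
\[
\frac{\binom{s+e_j+m}{m}}{\binom{s+m}{m}}=\prod_{k=1}^m\Bigl(1+\frac{e_j}{s+k}\Bigr)\le\Bigl(1+\frac{e_j}{s+1}\Bigr)^m\le 1+(2^m-1)\frac{e_j}{s+1},
\]
so that $s=2^m(e_1+\dots+e_n)$ suffices. Thus $2^m$ is a binomial-coefficient artifact of the prolongation count, not an elimination depth.
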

\begin{proof}
Since a codimension of the system $\Sigma$ is greater than  0, the
rank of $D$-module differentials $M/N$ is equal to 0.
It is clear, that $\rk_D N=n$, and we can choose in 
$\Sigma$ independent over $D$ subsystem
$\Sigma'$
such that $\Card \Sigma'=n$. Let $\Sigma'=\{F_1,\dots, F_n\}$.
Denote by $\Sigma_0$ following system
\begin{align}
F_i(m_1,...,m_n) &=0;\ i=1,\dots, n,\ F_i\in\Sigma'\notag\\
m_1+c_2m_2+ \dots+ c_nm_n&=\psi. \notag
\end{align}
Here $c_i, \psi$ are new differential indeterminates.
Consider $\Sigma_0$ as a system of linear equations relatively
$\Theta m_j$ with coefficients in the field
$\F'=\F(\Theta(c_2),\dots,\Theta(c_n),\Theta(\psi))$.
$\Sigma_0$ is the linear system of $n+1$  independent equations   in
$k(\Sigma_0)=\binom{e_1+m}{m}+\dots+\binom{e_n+m}{m}$ indeterminates over 
$\F'$.
It means that
the rang of $(n+1)\times k(\Sigma_0)$-matrix 
of corresponding  homogeneous system
is maximal and equals to $n+1$.
Now add the derivations of 
 $\Sigma_0$.  Let $\Sigma_s$ be
the system
\begin{align}
\Theta(s) F_i(m_1,...,m_n) &=0;\ i=1,\dots, n,\ F_i\in\Sigma'\notag\\
\theta(m_1+ c_2m_2 \dots c_nm_n) &=\theta(\psi), \theta\in\Theta(s). \notag
\end{align}
$\Sigma_s$ is a system in $\binom{s+e_1+m}{m}+\dots+\binom{s+e_n+ m}{m}$ indeterminates,
and 
the number of independent equations equals to 
$(n+1) \binom{s+m}{m}$.
We see that the the number of equations grows quicker, 
than number  of indeterminates. From independence  of 
$\Sigma'$ follows, that there exists
such $s$, that $\Sigma_s$ is   uniquely determinated. 
The matrix of the linear system $\Sigma_s$
consists of elements from the field $\F(\theta(s) (c_1),\dots,\theta(s) (c_n),\theta(s) (\psi))$.
Without using a division, we may transform this matrix  to the 
triangular form. It means, that we obtain
for any
 $j=1,\dots,n$ the expression $\alpha_jm_j=\lambda_j\psi$, where
$\alpha_j=\sum_{i=2}^n\sigma_i(c_i)$, $\sigma_i\in D_s$ 
is a derivative operator  of order  $\le s$.
Now it is sufficiently  to join to the differential field $\F$ 
the elements $c_2,\dots,c_n$, satisfying the conditions
$\alpha_i(c_2,\dots,c_n)\neq 0$.
Let $\G=\F\langle c_2,\dots,c_n\rangle$, 
$\tilde D=\G\otimes_\F D$. We see, that $\tilde D$-module $M/N$ 
is generated by $\psi=m_1+c_2m_2+\dots +c_nm_n$,
and any $m_j$ can be expressed as 
 $\lambda_j\psi, \ord\lambda_j\le s$.  

We must
find a suitable value $s$.
Show that for $s=2^m(e_1+ \dots+ e_n)$  a condition 
$$\binom{s+ e_1+m}{m}+ \dots+ \binom{s+e_n+m}{m}\le(n+1)
\binom{s+m}{m}$$
holds. Actually,
\begin{align}
\frac{\binom{s+m+e_1}{m}+\dots+\binom{s+m+e_m}{m}}
{\binom{s+m}{m}}=
\frac{\prod_{j=1}^m(s+e_1+j)}{\prod_{j=1}^m(s+j)}+\dots+
\frac{\prod_{j=1}^m(s+e_n+j)}{\prod_{j=1}^m(s+j)}=\notag\\
\prod_{j=1}^m\frac{(s+e_1+j)}{(s+j)}+\dots+
\prod_{j=1}^m\frac{(s+e_n+j)}{(s+j)}=
\prod_{j=1}^m\big(1+\frac{e_1}{s+j}\big)+\dots+
\prod_{j=1}^m\big(1+\frac{e_n}{s+j}\big)\le
\notag\\
\big(1+\frac{e_1}{s+1}\big)^m+\dots+\big(1+\frac{e_1}{s+1}\big)^m=
n+\sum_{j=1}^m\binom{m}{j}\big({\frac{e_1}{s+1}}\big)^j+\dots+
\sum_{j=1}^m\binom{m}{j}\big({\frac{e_n}{s+1}}\big)^j.\notag
\end{align}
Thus, for $s+1\ge\max_{i=1}^ne_i$ we have
\begin{align}
\frac{\binom{s+m+e_1}{m}+\dots+\binom{s+m+e_m}{m}}
{\binom{s+m}{m}}\le
&n+(2^m-1)(\frac{e_1}{s+1}+\dots+\frac{e_n}{s+1})\le \notag\\
&n+2^m\frac{(e_1+\dots+e_n)}{s+1}\le n+1,
\notag
\end{align}
since we put $s=2^m(e_1+\dots+e_n)$.
\end{proof}
As follows from the proof of theorem \ref{prim}, 
to find a primitive element
it is enough 
to differentiate
necessary number of  times the system 
$\Sigma'$ and to eliminate from
got linear system variables (for example, by the Gauss' method).
Thus, the theorem gives the algorithm to find a primitive element
for systems of linear differential equations.

Now we will prove the analogue of B\'ezout theorem  in the case 
of codimension 2 for the systems of linear differential equations.
\begin{theorem}
Let $\Sigma\subset\F\{y_1,\dots, y_n\}$ be a system of linear partial 
differential equations, $m=\Card\Delta$,
and let $\ord_{y_j} f\le e_j$ for all $f\in\Sigma,\ 1\le j\le n$.
Suppose that system $\Sigma$ has
differential type $m-2$. Then its
typical differential dimension $a_{m - 2}$ does not exceed 
\begin{equation}\label{bound}
2^{2m+2}(e_1+\dots+e_n)^2.
\end{equation} 
\end{theorem}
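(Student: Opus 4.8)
The plan is to deduce the bound from Theorem~\ref{prim} (which reduces the system to one differential indeterminate) together with the codimension~$2$ B\'ezout estimate of Theorem~\ref{Bezy} for a single indeterminate, while keeping track of the order inflation caused by the passage to a primitive element.

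Since $\Sigma$ has differential type $m-2$, we have $\deg\omega_{[\Sigma]}=m-2<m$, so $a_m(\omega_{[\Sigma]})=\rk_D(M/N)=0$ by Proposition~\ref{rk} and Theorem~\ref{prim} applies. It supplies a differential field extension $\G\supseteq\F$ (still of characteristic $0$), an element $\psi=m_1+c_2m_2+\dots+c_nm_n$ generating $Q:=\tilde M/\tilde N=\G\otimes_\F(M/N)$ over $\tilde D=\G\otimes_\F D$, and operators $\lambda_j\in\tilde D$ with $\lambda_j\psi=m_j$ in $Q$ and $\ord\lambda_j\le 2^{m}(e_1+\dots+e_n)=:t$. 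Put $\mathfrak a=\ker\bigl(\tilde D\to Q,\ 1\mapsto\psi\bigr)$, so that $Q\cong\tilde D/\mathfrak a$ is cyclic, i.e. exactly the one‑indeterminate situation.

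Next I would exhibit low‑order generators of $\mathfrak a$. Choose a finite subset $F_1,\dots,F_p\in\Sigma$ generating $N$ over $D$ (possible since $M$ is $D$-Noetherian) and write $F_k=\sum_j g_{kj}m_j$ with $\ord g_{kj}\le e_j$. For $\sigma\in\tilde D$ one has, in $\tilde M$, $\sigma\psi=\sum_j(\sigma c_j)m_j$ (with $c_1=1$), so $\sigma\in\mathfrak a$ iff $\sigma c_j=\sum_k h_k g_{kj}$ for all $j$ and suitable $h_k\in\tilde D$; feeding this into the identity $\sigma=\sigma\bigl(1-\sum_jc_j\lambda_j\bigr)+\sum_j(\sigma c_j)\lambda_j$ shows that $\mathfrak a$ is generated as a left ideal by $1-\sum_jc_j\lambda_j$ together with the elements $\sum_jg_{kj}\lambda_j$ ($k=1,\dots,p$), all of which do lie in $\mathfrak a$. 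Their orders are bounded by $\max_j e_j+t\le(2^{m}+1)(e_1+\dots+e_n)\le h:=2^{m+1}(e_1+\dots+e_n)$. Hence $Q$, regarded over the characteristic‑$0$ differential field $\G$, is the differentials module of a linear system in one indeterminate of order $\le h$, and (once $Q$ is known to have differential type $m-2$) Theorem~\ref{Bezy} gives $a_{m-2}(\omega_Q)\le h^2=2^{2m+2}(e_1+\dots+e_n)^2$.

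Finally I would check that $a_{m-2}(\omega_Q)=a_{m-2}(\omega_{[\Sigma]})$, which also supplies the missing fact $\deg\omega_Q=m-2$. Extending the base field from $\F$ to $\G$ leaves every $\F$‑dimension of filtration components unchanged, hence does not change the dimension polynomial of $M/N$ taken with its $(m_j)$-associated filtration. Comparing that filtration on $Q$ with the $\psi$-associated one, the relations $\psi=\sum_jc_jm_j$ ($c_j\in\G$) and $m_j=\lambda_j\psi$ ($\ord\lambda_j\le t$) yield $Q^{\psi}_s\subseteq Q^{m}_s\subseteq Q^{\psi}_{s+t}$ for all $s$; since $t$ is a constant, the two eventual polynomials have the same degree and the same leading coefficient, and the bound~(\ref{bound}) follows. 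I expect the main difficulty to be precisely this bookkeeping: verifying that the displayed substitution identity really exhausts $\mathfrak a$, and that the $(m_j)$- and $\psi$-filtrations on $Q$ sandwich one another up to a constant shift — in other words, that the primitive‑element reduction, although it inflates orders by the exponential factor $2^{m}$, costs nothing in the degree and only squares the relevant bound.
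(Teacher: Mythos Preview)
Your proof is correct and follows the same strategy as the paper: pass to a primitive element via Theorem~\ref{prim}, bound the orders of generators of its annihilator by $2^{m+1}(e_1+\dots+e_n)$, and invoke the single-indeterminate B\'ezout estimate of Theorem~\ref{Bezy}. You are in fact more careful than the paper on several points it leaves implicit---exhibiting the extra generator $1-\sum_j c_j\lambda_j$ of $\mathfrak a$, justifying invariance of the dimension polynomial under the base change $\F\to\G$, and comparing the $(m_j)$- and $\psi$-filtrations on $Q$---but the overall architecture is identical.
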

\begin{proof}
By the theorem \ref{prim} we may suppose that
 module of differentials $M/N$ is
generated by a primitive element $\psi$. 
Moreover, $m_j$ is expressed 
as the differential operators of order $\le 2^{m}(e_1+ \dots+e_n)$ in  $\psi$.
Let $J=\{\lambda\in\tilde D:\ \lambda \psi=0\}$ 
be an annihilator of element $\psi$.
$J$ is generated as an ideal in the ring of
differential operators $\tilde D$ by the elements of order not greater 
than
$$
2^{m}(e_1+\dots+e_n)+\max_{j=1}^n e_j,
$$
since we obtain the same orders of operators 
after  substitution the
expressions of $m_i$ as $\lambda_j\psi$ in the system $\Sigma$.
Thus, $J$ is 
generated  by the elements of order
$\le 2^{m}(2e_1+ \dots+ 2e_n)$. 
Now by  theorem \ref{Bezy}
we get an estimation(\ref{bound}).
\end{proof}
So, in the case of the system of linear differential
equations we got an upper and lower squared bound of a typical
differential dimension for differential type
$m-2$. It is better, than   estimation \cite{Grig}.


\begin{thebibliography}{99}
\bibitem{Boul}
F. Boulier, F. Ollivier, D.Lazard and M. Petitot,
Computing representations for radicals of finitely generated
differential ideals,
Applicable Algebra in Engineering, Communication and Computing,
v.20, N 1(2009), 73--121,
\bibitem{Janet}
A.Chistov, D.Grigoriev,
Complexity of a Standart Basis of a D-module,- St. Petersburg Math. J., v. 20 (2009), 709-736 (jointly with A.Chistov).
\bibitem{Dube}
T. Dub\'e, The Structure of Polynomial Ideals and {G}r{\"o}bner Bases,
SIAM Journal on Computing, v 19, N 4(1990), 750--773,
url="http://dx.doi.org/10.1137/0219053".
\bibitem{Grig}
D.Grigoriev,
Weak Bezout inequality for D-modules, Journal of Complexity, v. 21 (2005),
 532--542.
\bibitem{Hub}
E. Hubert,
Factorization-free Decomposition Algorithms in
Differential Algebra,
 Journal of Symbolic Computation,
v. 29, N 4-5 (2000), 641-662, 
url="http://dx.doi.org/10.1006/jsco.1999.0344" 
\bibitem{Kolchin}
E.R.Kolchin,
Differential Algebra and Algebraic Groups,
Academic Press, 1973.
\bibitem{KLMP}
M.V.Kondratieva, A.B.Levin, A.V.Mikhalev, E.V.Pankratiev,
 Differential and Difference Dimension Polynomials,
 Kluwer Academic Publisher, 1999.
\bibitem{Kondr}
M.V.Kondratieva,
 An Upper Bound  for Minimizing Coefficients of
Dimension Kolchin Polynomial, Programming and Computer Software, 
v. 36, N 2 (2010),  83--86.
\bibitem{Mayr}
Mayr, Meyer, The complexity of a word problem for commutative semigroups and polynomial ideals, Adv. in Math, v. 46 (1982),  305-329.
\bibitem{Ritt}
J. Ritt, Differential Algebra, American Mathematical Society, 
New York, 1950.
\end{thebibliography}
\end{document}